\documentclass[12pt]{amsart}
\usepackage{amssymb}
\usepackage{enumerate}
\newtheorem{Theorem}{Theorem}[section]
\newtheorem{Lemma}[Theorem]{Lemma}
\newtheorem{Proposition}[Theorem]{Proposition}

\def\Ree{\operatorname{Re}}
\def\Imm{\operatorname{Im}}
\def\Cal{\mathcal}

\def\DD{\mathbb D}

\def\CC{\mathbb C}

\def\dist{\operatorname{dist}}
\def\span{\operatorname{span}}

\def\RR{\mathbb R}

\def\Omega{\varOmega}
\let\phi=\varphi

\def\wdtl{\widetilde}
\def\C{\Bbb C}

\def\eps{\varepsilon}
\def\span{\mbox{span}}
\def\dist{\mbox{dist}}

\def\eps{\varepsilon}
\def\Delta{\varDelta}
\def\Phi{\varPhi}

\begin{document}
\title{On different extremal bases for $\CC$-convex domains}

\author{Nikolai Nikolov}
\address{Institute of Mathematics and Informatics\\ Bulgarian Academy of
Sciences\\1113 Sofia, Bulgaria}
\email{nik@math.bas.bg }

\author{Peter Pflug}
\address{Carl von Ossietzky Universit\"at Oldenburg, Institut f\"ur Mathematik,
Postfach 2503, D-26111 Oldenburg, Germany}
\email{peter.pflug@uni-oldenburg.de}

\author{Pascal J. Thomas}
\address{Universit\'e de Toulouse\\ UPS, INSA, UT1, UTM \\
Institut de Math\'e\-ma\-tiques de Toulouse\\
F-31062 Toulouse, France} \email{pthomas@math.univ-toulouse.fr}

\thanks{This note was written during the stay of the third named author at the Institute of
Mathematics and Informatics of the Bulgarian Academy of Sciences supported by a CNRS-BAS project
(September 2009), the stay of the first named author at the University of Oldenburg supported
by a DAAD grant (November 2009 - January 2010), and the stay of the first and second named authors
at the Erwin Schr{\"o}dinger Institute for Mathematical Physics in Vienna (November 2009, programme
``The $\bar\partial$-Neumann problem: analysis, geometry and potential theory''). The research was
partially supported by the DFG grant 436POL113/103/0-2. The authors thank H.~P.~Boas for his careful
reading and comments on a previous version of the note.}

\subjclass[2010]{32F17}

\keywords{extremal basis, $\CC$-convex domain}

\begin{abstract}
We discuss some extremal bases for $\CC$-convex domains.
\end{abstract}

\maketitle

\section{Introduction}

In order to estimate the Bergman kernel (on the diagonal) and the Carath\'eodory, Bergman, and Kobayashi metric on convex
(or linearly convex) domains special coordinates near the boundary were introduced by J.-H. Chen \cite{Che1989}, in his
Ph.~D. dissertation, and by J.~D. McNeal \cite{Neal1992,Neal1994}. A lot of further studies were based on this
orthonormal basis introduced in that papers, see e.g. \cite{MS1994,MS1997,Gau1997,Neal2001}.
We will call this basis a \textit{maximal basis}; a detailed construction, which also justifies
the name, will be described later. On the other side, for the same purpose a \textit{minimal basis} was
introduced, for example, in the papers \cite{Hef2002,Con2002,NP2003,Hef2004,DF2006,NPZ2009}. For a general notion
of extremal basis see \cite{CD2008}. Looking more carefully at the work based on the maximal basis it can be seen that from the very beginning a crucial property of that basis is used, but never proved. In this short note we will
present an example showing that exactly this property unfortunately does not hold in general for
the maximal basis. But it does hold for the minimal basis.  This property
may be phrased by saying that certain vectors connected with this basis are orthogonal to certain complex tangent planes;
details will be given later. Starting with this observation it should be asked whether the estimates for invariant metrics
given with help of the maximal basis still hold. In this note we show that the answer is positive.

Description of the two extremal bases for a domain $D\subset\CC^n$ containing no complex lines.

(a) \textit{Maximal basis}. Fix a point $q\in D$. Then choose a boundary point $p_1\in\partial D$
such that $m_1:=\|p_1-q\|=d_D(q)$, where $d_D(q)$ denotes the Euclidean boundary distance of
the point $q$. Put $a_1:=(p_1-q)/\|p_1-q\|$. Note that the point $p_1$ is not in general uniquely determined.
Denote by $H_1$ the affine hyperplane through $q$ which is orthogonal to the vector $a_1$, i.e.~ $H_1=q+\span\{a_1\}^\bot$.
Put $D_2:=D\cap H_1$. Choose a unit vector $a_2\in\span\{a_1\}^\bot$ and a boundary point $p_2\in\partial D\cap H_1$ such that $m_2:=\dist(q,a_2,\partial D)=\sup\{d_D(q;a)\}$, where the supremum is taken over all unit vectors $a$ in $\span\{a_1\}^\bot$, and $p_2=q+m_2a_2$. Here $d_D(q;a):=\sup\{r>0:q+r\DD a\subset D\}$ denotes the boundary distance of $q$ in direction of $a$ and $\DD$ is the open unit disc in $\CC$. In the next step put $H_2:=q+\span\{a_1,a_2\}^\bot$; $H_2$ is the affine $(n-2)$-dimensional plane through $q$ orthogonal to $\span\{a_1,a_2\}$.
Define $D_3:=H_2\cap D$ and continue this procedure, which finally leads to an orthonormal basis $a_1,\dots,a_n$, which is called a \textit{maximal  basis} of $D$ at $q$, to a sequence of positive numbers $m_2\geq\cdots\geq m_n$, and to boundary points $p_1,\dots,p_n$ with $p_j=q+m_ja_j$ for all $j$'s. Obviously, in this general context this basis depend on $q$ and it is, in general, not uniquely determined.

(b) \textit{Minimal basis}: Let $q$, $e_1:=a_1$, $s_1:=m_1$, $\wdtl p_1:=p_1$, where the $a_1$, $p_1$, and $m_1$ are taken from the former construction. Let $H_1$ be also as above. Define a new boundary point $\wdtl p_2=q+s_2e_2$, where $e_2\in\span\{e_1\}^\bot$ is an unit vector and
$s_2=\dist(q,\partial_{H_1}(D\cap H_1))$. Note that in this construction, opposite to the above one, $\wdtl p_2$ is chosen to be a nearest boundary point of $\partial D\cap H_1$ to $q$. Then put $H_2:=q+\span\{e_1, e_2\}^\bot$; $H_2$ is the affine $(n-2)$-dimensional plane through $q$ orthogonal to $\span\{e_1, e_2\}$. Define $D_3:=H_2\cap D$ and continue now this procedure for $H_3$ by always taking the nearest boundary point. This finally leads to an orthonormal basis $e_1,\dots,e_n$, which is called the \textit{minimal basis} of $D$ at $q$, to a sequence of positive numbers $s_1\leq s_2\leq\cdots\leq s_n$, and to boundary points $\wdtl p_1,\dots,\wdtl p_n$ of $D$ with $\wdtl p_j=q+s_je_j$, $1\leq j\leq n$. As above, this basis depend on $q$ and it is, in general, not uniquely determined.

Assume now that, in addition, $D$ is convex and $\Cal C^\infty$-smooth near a boundary point $p_1$
(of finite type). Let $r$ be its boundary function. Then the property indicated in the introduction can
be described as follows: Fix $q\in D$ on the inner normal at $p_1$, sufficiently near to $p_1$, and take the coordinate system given by the maximal  basis at $q$, i.e. take $q=0$ and write any point $z$ of $\CC^n$
as $z=\sum_{j=1}^n w_ja_j$. Then it is claimed (see, for example, \cite[Proposition 2.2 (ii)]{Che1989} and
\cite[Proposition 3.1 (i)]{Neal1992}) that
$$
\frac{\partial r(p_k)}{\partial w_j}=0,\quad j=k+1,\dots,n. \quad (*)
$$
In the original coordinate system this property reads as
$$
\sum_{s=1}^n\frac{\partial r(p_k)}{\partial z_s}a_{j,s} =0, \quad j=k+1,\dots,n.
$$
Therefore, an equivalent form to state (*) is to say that the vectors $a_j$, $j=k+1,\dots,n$, belong to the complex tangent space $T_{p_k}^\CC(\partial D)$ or that  $T_{p_k}^\CC(\partial D)\cap\span\{a_1,\dots,a_k\}^\bot=\span\{a_{k+1},\dots,a_n\}$. We should point out that exactly the property (*) is the basis of the arguments in those papers dealing with maximal bases (minimal bases have this crucial property).
But, as the following example will show, (*) is not true near the boundary of a domain in $\CC^3.$ Nevertheless, in section 3 it will be proved that the estimates obtained in terms of the maximal basis still hold.

\section{An Example}

Let $\beta_1$ and $\beta_2$ be real numbers with $0<\beta_2<\beta_1<1$. Define
$$
D:=\{z\in\CC^2\times\CC:\rho(z)+|z_3|^2<1\},
$$
where $\rho(z)=x_1^2+\beta_1y_1^2+x_2^2+\beta_2y_2^2.$ Note that $D$ is a strictly
(pseudo)con\-vex domain with real-analytic boundary. Fix $q=(0,0,\delta)$, $0<\delta<1.$
Then following the construction of the maximal basis of $D$ at $q$ leads to
$m_1=1-\delta$ and $a_1=p_1=(0,0,1)$. In the next step the construction gives the domain
$$
D_\delta:=\{z\in\CC^2:\rho(z)<1-\delta^2\}.
$$
Note that $D_\delta$ is up to a dilatation $D_0$. So it suffices to study $D_0.$ If we have a maximal basis, write $a_2= (b,0)$ where
$b \in \CC^2$. Then $\span\{a_1,a_2\}^\bot$ is spanned by
$(-\overline b_2, \overline b_1, 0)$.  So
put
$$\Cal T:=\{b\in\CC^2:\frac{\partial \rho(b)}{\partial z_1}(-\overline b_2)+
\frac{\partial \rho(b)}{\partial z_2}(\overline b_1)=0\}.$$

\begin{Lemma} \label{lemma}
$\Cal T=\{b\in\CC^2: b_1=0 \text{ or } b_2=0 \text{ or } \Imm b_1=\Imm b_2=0\}$.
\end{Lemma}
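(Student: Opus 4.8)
The plan is to reduce the single complex equation defining $\Cal T$ to a pair of real polynomial equations and then settle the matter by a short case analysis, using the strict inequalities on the $\beta_j$.

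First I would compute the Wirtinger derivatives of $\rho$. Since $\rho(z)=x_1^2+\beta_1y_1^2+x_2^2+\beta_2y_2^2$ with $x_j=\Ree z_j$, $y_j=\Imm z_j$, a direct computation gives
$$
\frac{\partial\rho}{\partial z_j}(b)=\Ree b_j-i\beta_j\,\Imm b_j,\qquad j=1,2.
$$
Writing $b_j=u_j+iv_j$, the defining relation of $\Cal T$ then becomes
$$
(u_1-i\beta_1v_1)\bigl(-(u_2-iv_2)\bigr)+(u_2-i\beta_2v_2)(u_1-iv_1)=0.
$$

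Next I would expand this product and separate real and imaginary parts. The real part simplifies to $(\beta_1-\beta_2)v_1v_2$ and the imaginary part to $(1-\beta_2)u_1v_2-(1-\beta_1)u_2v_1$, so that $b\in\Cal T$ if and only if
$$
(\beta_1-\beta_2)v_1v_2=0\qquad\text{and}\qquad(1-\beta_2)u_1v_2=(1-\beta_1)u_2v_1.
$$
Here the hypothesis $0<\beta_2<\beta_1<1$ enters: since $\beta_1\neq\beta_2$, the first equation forces $v_1=0$ or $v_2=0$. If $v_1=0$, the second equation reads $(1-\beta_2)u_1v_2=0$, and $\beta_2\neq1$ gives $u_1=0$ (hence $b_1=0$) or $v_2=0$ (hence $\Imm b_1=\Imm b_2=0$); the case $v_2=0$ is symmetric and yields $b_2=0$ or $\Imm b_1=\Imm b_2=0$. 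This establishes $\Cal T\subset\{b_1=0\}\cup\{b_2=0\}\cup\{\Imm b_1=\Imm b_2=0\}$, and the reverse inclusion is immediate by substituting each of the three conditions into the two displayed equations.

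The computation is entirely routine and I do not expect any genuine obstacle; the only point worth highlighting is that it is precisely the diagonal form of $\rho$ (no cross terms) that makes the real/imaginary decomposition decouple so cleanly, reducing the verification to the two elementary equations above.
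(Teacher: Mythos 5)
Your proof is correct and follows essentially the same route as the paper: you reduce the defining equation of $\Cal T$ to the two real equations $(\beta_1-\beta_2)\Imm b_1\Imm b_2=0$ and $(1-\beta_2)\Ree b_1\Imm b_2=(1-\beta_1)\Ree b_2\Imm b_1$, which are exactly the ones the paper's ``simple calculations'' produce, and the ensuing case analysis using $0<\beta_2<\beta_1<1$ is the intended conclusion. You have merely written out the details the paper leaves implicit; there is nothing to correct.
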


\begin{proof}
Simple calculations show that $b\in\Cal T$ if and only if
$$
\begin{aligned}(\beta_1-\beta_2)\Imm b_1\Imm b_2&=0\\
(1-\beta_1)\Imm b_1\Ree b_2&=(1-\beta_2)\Imm b_2\Ree b_1,
\end{aligned}
$$
from which the statement of the lemma follows.
\end{proof}

The next result shows that the property (*) is, in general, not true for a maximal basis.
Let  $p_2\in\partial D_0$ be such that
$$\frac{d_{D_0}(0;p_2)}{\|p_2\|}=m_2=\sup_{a\in\CC^2,\|a\|=1}d_{D_0}(0;a).$$

\begin{Proposition}$p_2\not\in\Cal T.$
\end{Proposition}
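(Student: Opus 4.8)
The plan is to pin down the maximal direction $a_2$ by an explicit optimization and then to observe that at the optimum the two components of $a_2$ (hence of $p_2$) have a phase difference of $\pi/2$, which by Lemma~\ref{lemma} is exactly what keeps $p_2$ out of $\Cal T$.

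First I would reduce everything to minimizing a single function. Since $\rho$ is a positive real quadratic form, for a unit vector $a\in\CC^2$ one has $r\DD a\subset D_0$ iff $r^2\max_{|\lambda|\le1}\rho(\lambda a)\le1$; as $\lambda\mapsto\rho(\lambda a)$ is a nonnegative quadratic expression it attains its maximum on $|\lambda|=1$, so $d_{D_0}(0;a)=M(a)^{-1/2}$ with $M(a):=\max_\theta\rho(e^{i\theta}a)$. Hence $m_2=\bigl(\min_{\|a\|=1}M(a)\bigr)^{-1/2}$, and $a_2$ — identified with the vector $b\in\CC^2$ of the statement via $(b,0)\in\CC^2\times\{0\}$ — is any unit minimizer of $M$. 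Moreover $\Cal T$ is invariant under multiplication by positive reals and $p_2=m_2b$ with $m_2>0$, so it suffices to prove $b\notin\Cal T$.

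Next I would make $M$ explicit. Writing $a=(r_1e^{i\phi_1},r_2e^{i\phi_2})$ with $r_1^2+r_2^2=1$ and using $\cos^2x+\beta\sin^2x=\tfrac{1+\beta}2+\tfrac{1-\beta}2\cos2x$, one obtains
$$
\rho(e^{i\theta}a)=\frac{r_1^2(1+\beta_1)+r_2^2(1+\beta_2)}2+\frac12\bigl(c_1\cos(2\theta+2\phi_1)+c_2\cos(2\theta+2\phi_2)\bigr),\qquad c_j:=r_j^2(1-\beta_j)\ge0 .
$$
The bracket is a sinusoid in $2\theta$ of amplitude $\sqrt{c_1^2+c_2^2+2c_1c_2\cos(2\phi_1-2\phi_2)}$, so $M(a)$ has a closed form in which $\phi_1,\phi_2$ enter only through $\cos(2\phi_1-2\phi_2)$; since (a posteriori) the optimal $b$ has $c_1,c_2>0$, minimizing forces $\cos(2\phi_1-2\phi_2)=-1$, i.e.\ $\arg b_1-\arg b_2\equiv\tfrac\pi2\pmod\pi$, and the amplitude collapses to $|c_1-c_2|$. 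Putting $t=r_1^2\in[0,1]$, it then remains to minimize over $t$ an explicit piecewise-affine function with break at $t_0:=\tfrac{1-\beta_2}{(1-\beta_1)+(1-\beta_2)}\in(0,1)$; one checks it is strictly decreasing on $[0,t_0]$ and strictly increasing on $[t_0,1]$ (here $\beta_1,\beta_2<1$ is used), so the minimum is attained exactly at $t=t_0$. Thus every minimizer satisfies $|b_1|^2=t_0\in(0,1)$ and $|b_2|^2=1-t_0\in(0,1)$ (at which, incidentally, $c_1=c_2$, so $\rho$ is constant on the circle $m_2e^{i\theta}b$, confirming $p_2\in\partial D_0$).

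Consequently the optimal $b$ has $b_1\ne0$, $b_2\ne0$ and $\arg b_1-\arg b_2\equiv\tfrac\pi2\pmod\pi$, so $b_1,b_2$ are not both real, i.e.\ $\Imm b_1=\Imm b_2=0$ fails; by Lemma~\ref{lemma}, $b\notin\Cal T$, whence $p_2=m_2b\notin\Cal T$. I expect the one genuinely delicate point to be this last minimization over $t$: one must resolve the absolute value $|c_1-c_2|=\bigl|t(1-\beta_1)-(1-t)(1-\beta_2)\bigr|$ into its two affine branches, verify strict monotonicity of each toward $t_0$, and thereby conclude that \emph{every} maximal direction has the stated phase — which is precisely what places $p_2$ off the degenerate locus $\Cal T$ described in Lemma~\ref{lemma}.
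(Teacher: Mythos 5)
Your proof is correct, and it takes a genuinely more computational route than the paper's. Both arguments begin with the same reduction $d_{D_0}(0;a)=R(a)^{-1/2}$, where $R(a):=\max_\theta\rho(e^{i\theta}a)$, and the same trigonometric rewriting; but the paper never solves the extremal problem: it verifies, through the three cases of Lemma \ref{lemma}, that $R\equiv1$ on unit vectors of $\Cal T$, and then exhibits the single competitor $b^\ast=(1,i)/\sqrt2$ with $R(b^\ast)<1$, so no maximizing direction can lie in $\Cal T$. You instead minimize $R$ over the whole unit sphere: writing $R(a)=\tfrac12\bigl(t(1+\beta_1)+(1-t)(1+\beta_2)\bigr)+\tfrac12\sqrt{c_1^2+c_2^2+2c_1c_2\cos2(\phi_1-\phi_2)}$ with $t=|a_1|^2$, $c_1=t(1-\beta_1)$, $c_2=(1-t)(1-\beta_2)$, bounding the square root below by $|c_1-c_2|$, minimizing the resulting piecewise-affine $m(t)$ at its interior break $t_0$, and concluding that every maximal direction has both components nonzero with phase difference $\pi/2$ modulo $\pi$, which Lemma \ref{lemma} excludes from $\Cal T$. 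The logic is sound; the only presentational point is the ``a posteriori'' step: it is cleaner to state first that $R(a)\ge m(t)$ for \emph{every} unit $a$ (the amplitude bound needs no hypothesis on $c_1,c_2$), deduce from $m(t_0)<1=m(0)=m(1)$ that any minimizer has $t=t_0\in(0,1)$, and only then use $c_1c_2>0$ to force $\cos2(\phi_1-\phi_2)=-1$; as written, the reduction momentarily assumes what is proved afterwards. What your approach buys is the complete solution of the extremal problem, namely the exact value $m_2=\bigl(1-\tfrac{(1-\beta_1)(1-\beta_2)}{(1-\beta_1)+(1-\beta_2)}\bigr)^{-1/2}$ and the precise maximal directions (incidentally it also gives $R(b^\ast)=(1+\beta_1)/2$ rather than the value $(1+\beta_2)/2$ stated in the paper, a harmless slip there), at the cost of more algebra; the paper's comparison argument is shorter and needs only the crude dichotomy that the directional distance equals $1$ on $\Cal T$ and exceeds $1$ somewhere off $\Cal T$.
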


\begin{proof}
Let $b\in\Cal T$ be a unit vector. Observe that $\rho(re^{i\alpha}b)<1$ for all $\alpha\in\RR$ if and only if
$r^2R(b)<1,$ where $R(b):=\max\{\rho(e^{i\alpha}b):\alpha\in\RR\}$. Therefore, $d_{D_0}(0;b)=1/\sqrt{R(b)}$.
Write $b=(e^{i\phi_1}\cos\Theta,e^{i\phi_2}\sin\Theta)$, where $0\leq\Theta<2\pi$ and $0\le\phi_1,\phi_2\le\pi/2.$
According to Lemma \ref{lemma}, there are three possibilities for $b$:

$\bullet$ $\Theta=0$ or $\Theta=\pi$: $\rho(e^{i\alpha}b)=\cos^2(\alpha+\phi_1)+\beta_1\sin^2(\alpha+\phi_1).$

$\bullet$ $\Theta=\pi/2$ or $\Theta=3\pi/2$: $\rho(e^{i\alpha}b)=\cos^2(\alpha+\phi_2)+\beta_2\sin^2(\alpha+\phi_2).$

$\bullet$ $\phi_1=\phi_2=0$: $\rho(e^{i\alpha}b)=\cos^2\alpha+\sin^2\alpha(\beta_1\cos^2\Theta+\beta_2\sin^2\Theta).$

\noindent Hence $R(b)=1$ in all the three cases.

On the other hand, there is a unit vector $b^\ast\in\Bbb C^2$ with $R(b^\ast)<1$ which implies that $p_2\not\in\Cal T$.
To define $b^\ast$,
take $\Theta:=\pi/4$ $\phi_1:=0$ and $\phi_2:=\pi/2.$ Then
$2\rho(e^{i\alpha}b^\ast)=1+\beta_2+(\beta_1-\beta_2)\sin^2\alpha.$
Since $\beta_1<\beta_2<1$, it follows that  $R(b^\ast)=\frac{1+\beta_2}{2}<1.$
\end{proof}

\section{Estimates and localization}

Let $D\subset\CC^n$ be a $\C$-convex domain containing no complex lines, i.e.
any non-empty intersection with a complex line is biholomorphic to $\DD$
(cf. \cite{APS2004,Hor1994}). For $z\in D$ denote by $e_1(z),\dots,e_n(z)$
a minimal basis at $z$ and by $a_1(z),\dots,a_n(z)$ a reordered maximal basis at $z$ which
means that the new $a_1(z)$ is the old one, but then $a_2(z)=a_n$, $a_3(z)=a_{n-1}$, etc.
Let $s_1(z)\le\dots\le s_n(z)$ and $m_1(z)\le\dots\le m_n(z)$ be the respective numbers
(recall that $s_1(z)=m_1(z)=d_D(z)$).
Set $s_D(z):=\prod_{j=1}^ns_j(z)$ and $m_D(z):=\prod_{j=1}^nm_j(z).$
Moreover, denote by $K_D(z)$ and $F_D(z;X)$ the Bergman kernel and any of the invariant metrics of $D$, respectively.
For $X\in\C^n,$ set
$$E_D(z;X):=\sum_{j=1}^n\frac{|\langle X,e_j(z)\rangle|}{s_j(z)},\quad
A_D(z;X):=\sum_{j=1}^n\frac{|\langle X,a_j(z)\rangle|}{m_j(z)}.$$
We shall write $f(z)\lesssim g(z)$ if $f(z)\le c g(z)$ for some constant $c>0$ depending only on
$n;$ $f(z)\sim g(z)$ means that $f(z)\lesssim g(z)\lesssim f(z).$ By \cite{NPZ2009}, we know that
$$K_D(z)\sim 1/s^2_D(z),\quad F_D(z;X)\sim E_D(z;X)\sim 1/d_D(z;X)$$
(for weaker versions of these results, see \cite{NP2003,Blu2005,Lie2005}).
For short, sometimes we shall omit the arguments $z$ and $X$. It follows by \cite[Lemma 15]{NPZ2009}
that $$K_D\lesssim 1/m^2_D,\quad F_D\lesssim A_D.$$ In particular,
$$1/d_D(z;X)\sim E_D(z;X)\lesssim A_D(z;X)$$
The main consequence of the (wrong) property $(*)$ for maximal bases of a smooth convex bounded domain of
finite type is the fact that
$$A_D(z;X)\sim_D 1/d_D(z;X),$$ where the constant in $\sim_D$ depends on $D.$
Using this fact, it is shown in \cite{Che1989,Neal1994,Neal2001} that
$$K_D\sim_D 1/m_D^2,\quad F_D\sim_D A_D$$
The following two propositions imply that fortunately these estimates still hold.

The first one is contained in \cite{Hef2004} for the case of a smooth convex bounded
domain of finite type. The proof there invokes the estimate $1/d_D(z;X)\sim_D A_D(z;X)$
but, in fact, it uses only the trivial part of this estimate: $1/d_D(z;X)\lesssim_D A_D(z;X).$

\begin{Proposition}\label{pr1} Let $D\subset\C^n$ be a $\C$-convex domain containing no
complex lines. Then $m_j(z)\sim s_j(z)$, $j=1,\dots,n$, $z\in D$.
\end{Proposition}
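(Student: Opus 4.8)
The plan is to prove the two-sided estimate $m_j(z)\sim s_j(z)$ by induction on the dimension, exploiting the fact that both bases begin with the same first step ($a_1=e_1$, $m_1=s_1=d_D(z)$) and that after the first step each construction reduces to a lower-dimensional $\C$-convex domain, namely $D\cap H_1$ inside the hyperplane $H_1$. Since $\C$-convexity is preserved under intersection with affine subspaces, $D\cap H_1$ is an $(n-1)$-dimensional $\C$-convex domain containing no complex lines, and the restriction of both the maximal and minimal basis constructions to the subsequent steps are exactly the corresponding constructions for $D\cap H_1$ at the point $q$. So if the proposition holds in dimension $n-1$, the only thing left to compare is $m_2(z)$ versus $s_2(z)$ — but here I must be careful, because after the reordering described in the excerpt, $m_2(z)$ is the \emph{last} (i.e.\ smallest) of the old maximal numbers, and $s_2(z)$ is already the smallest of the minimal numbers; the real content is comparing the whole \emph{sets} $\{m_2,\dots,m_n\}$ and $\{s_2,\dots,s_n\}$, which is what the induction delivers once we know the two constructions on $D\cap H_1$ produce comparable number sequences.

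So more precisely, the key step is: after fixing the common direction $a_1=e_1$ and setting $D':=D\cap H_1$, both sequences $(m_2,\dots,m_n)$ (before reordering) and $(s_2,\dots,s_n)$ arise as, respectively, a maximal and a minimal basis sequence of the $(n-1)$-dimensional $\C$-convex domain $D'$ at $q$. By the inductive hypothesis applied to $D'$, after appropriate reordering these two sequences are comparable term-by-term with constants depending only on $n-1$. Combined with $m_1=s_1$, this yields $m_j(z)\sim s_j(z)$ for all $j$ with a constant depending only on $n$. The base case $n=1$ is trivial since both bases coincide. One should also record why the supremum/infimum defining $a_2$ and $e_2$ are attained and why $D'$ contains no complex lines — the latter is immediate since a complex line in $D'$ would be a complex line in $D$.

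The main obstacle, and where I would spend the most care, is verifying that the inductive reduction is \emph{legitimate}: one must check that the later steps of the maximal (resp.\ minimal) basis construction for $D$ at $q$ genuinely coincide with the maximal (resp.\ minimal) basis construction for $D'=D\cap H_1$ at $q$, \emph{including} that the nested hyperplanes $H_2,\dots$ and sliced domains $D_3,\dots$ are the same objects in both pictures, and that the boundary distances $d_D(q;a)$ for $a\in\span\{a_1\}^\bot$ equal the corresponding boundary distances computed inside $H_1$ (i.e.\ $d_{D'}(q;a)$). This is essentially a bookkeeping check but it is the crux, since the whole proof rests on it; once it is in place the comparison of sequences is automatic from the inductive hypothesis. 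I would also note that no smoothness or finite-type hypothesis is needed here — unlike in \cite{Hef2004} — precisely because the argument is purely geometric and never touches the boundary function $r$ or property $(*)$.
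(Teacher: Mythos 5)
Your inductive reduction is not legitimate, and the difficulty is not mere bookkeeping. In the paper's construction the \emph{first} step of a maximal basis is a \emph{nearest}-point choice ($m_1=d_D(q)$), and only the steps $2,\dots,n$ are farthest-direction choices. Consequently the tail $(a_2,\dots,a_n)$, $(m_2,\dots,m_n)$ of the maximal basis of $D$ is \emph{not} a maximal basis of the slice $D'=D\cap H_1$: a maximal basis of $D'$ would again begin with the nearest boundary point of $D'$, whereas your tail is the ``all-maximal'' basis ($k=0$ in the paper's Remark). So the inductive hypothesis, which compares maximal with minimal bases, simply does not apply to the tail. Nor can you repair this by strengthening the hypothesis to compare the all-maximal basis with the minimal basis: those two constructions already choose \emph{different} first directions (farthest versus nearest), hence after one step they live in different $(n-1)$-dimensional slices, and there is no common lower-dimensional domain on which to induct. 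Only the minimal basis has the self-reproducing slicing property you rely on.

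There is also a structural warning sign: your argument uses nothing about $\C$-convexity except that slices contain no complex lines, so if it worked it would prove $m_j\sim s_j$ with dimensional constants for arbitrary bounded domains --- which is false (already in a two-dimensional slice, a thin ``cross'' such as a union of the polydiscs $\{|w_1|<\eps,\,|w_2|<1\}$ and $\{|w_1|<1,\,|w_2|<\eps\}$ has all farthest-direction radii $\sim 1$ while the minimal-basis numbers are $\sim\eps$). The paper's proof is genuinely analytic, not geometric slicing: it invokes the estimate $1/d_D(z;X)\sim E_D(z;X)\lesssim A_D(z;X)$ for $\C$-convex domains without lines (from [NPZ 09]), gets $m_j\lesssim s_j$ via a determinant/permutation argument on the unitary transition matrix between the two bases, and gets $m_k\gtrsim s_k$ by a dimension count producing a unit vector $a_k'\in\span(e_k,\dots,e_n)$ orthogonal to $a_{k+1},\dots,a_n$, for which the maximality of the construction gives $m_k\ge d_D(z;a_k')$ and the displayed estimate gives $d_D(z;a_k')\gtrsim s_k$. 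Some input of this analytic type is indispensable; your proposal is missing it.
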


\begin{proof} Fix $z\in D$ and put $m_j=m_j(z),$ $s_j=s_j(z)$ First, we shall prove that $m_j\lesssim s_j.$
Since $E_D\lesssim A_D,$ it is enough show that if $E_D\le cA_D,$ then $m_j\le c' s_j,$ where $c'=n!c.$

Expanding the determinant of the matrix of the unitary transformation between the bases, it follows that
$\prod_{j=1}^n|\langle a_j,e_{\sigma(j)}\rangle|\ge 1/n!$ for some permutation $\sigma$ of $\{1,\dots,n\}$
In particular,
$|\langle a_j,e_{\sigma(j)}\rangle|\ge 1/n!.$ Then $E_D(z;a_j)\le c A(z;a_j)$
implies that $m_j\le c's_{\sigma(j)}.$

Assume now that $c's_k<m_k$ for some $k.$ Then
$$c's_k<m_k\le m_j\le c's_{\sigma(j)},\quad j\ge k.$$
This shows that $\sigma(j)>k$
for any $j\ge k,$ which is a contradiction, since $\sigma$ is a permutation.

The above arguments show that $\wdtl s_j\sim s_j,$ where $\wdtl s_j$ are the respective
numbers for another minimal basis at $z.$ So we may assume that $e_1=a_1.$ We know that
$m_1=p_1.$ It remains to prove that $m_k\gtrsim s_k$ for $k\ge 2.$
Choose a unit vector $a_k'$ in $\span(e_k,\dots,e_n)$ orthogonal to $a_{k+1},\dots a_n$
($a_n'=e_n$ if $k=n$). Then $a_k'$ is also orthogonal to $a_1=e_1.$ Hence
$m_k\ge d_D(z;a_k')$ (by construction of a maximal basis).
On the other hand, since $a_k'$ is orthogonal to $e_1,\dots,e_{k-1},$ then
$$\frac{1}{d_D(z;a_k')}\sim E_D(z;a_k')=
\sum_{j=k}^n\frac{|\langle a_k',e_j\rangle|}{s_j}\lesssim\frac{1}{s_k}.$$
So $m_k\ge d_D(z;a_k')\gtrsim s_k.$
\end{proof}

\begin{Proposition}\label{pr2} Let $D$ be as in Proposition \ref{pr1}. Then $A_D\sim E_D.$
\end{Proposition}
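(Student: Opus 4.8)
The inequality $E_D\lesssim A_D$ is already recorded in the text, so the whole content of Proposition \ref{pr2} is the reverse bound $A_D(z;X)\lesssim E_D(z;X)$. The plan is to fix $z\in D$ and $X\in\CC^n$, work in the maximal basis $a_1,\dots,a_n$ at $z$ (in the reordered version, so that $m_1\le\dots\le m_n$), and estimate each term $|\langle X,a_k\rangle|/m_k$ of $A_D(z;X)$ by $E_D(z;X)$. Writing $X=\sum_k\langle X,a_k\rangle a_k$, it suffices to show $\|a_k\|_{*}/m_k\lesssim 1/d_D(z;a_k)$ in the appropriate sense, i.e. to compare the single basis vector $a_k$ with the minimal data; equivalently, since $1/d_D(z;\cdot)\sim E_D$, it is enough to prove $E_D(z;a_k)\lesssim 1/m_k$ for every $k$.

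The key step, which mirrors the last paragraph of the proof of Proposition \ref{pr1}, is the following: for each $k$ choose a unit vector $a_k'\in\span(e_k,\dots,e_n)$ orthogonal to $a_{k+1},\dots,a_n$ (and to $a_1=e_1$, after replacing the minimal basis by another one with $e_1=a_1$, which is harmless by the $\wdtl s_j\sim s_j$ remark). On one hand $a_k'\perp e_1,\dots,e_{k-1}$ gives $E_D(z;a_k')=\sum_{j\ge k}|\langle a_k',e_j\rangle|/s_j\lesssim 1/s_k$, hence $d_D(z;a_k')\gtrsim s_k\gtrsim m_k$ by Proposition \ref{pr1}. On the other hand $a_k'$ lies in $\span(a_k,\dots,a_n)$ and is orthogonal to $a_{k+1},\dots,a_n$, so its $a_k$-component has modulus $1$; applying $d_D(z;\cdot)\lesssim d_D(z;a_j)/|\langle\cdot,a_j\rangle|$-type monotonicity of the maximal-basis construction, one gets $d_D(z;a_k')\lesssim m_k$ as well, so in fact $d_D(z;a_k')\sim m_k$. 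The point is that this same $a_k'$ then lets one pass from $a_k$ to the minimal basis: since $a_k=\langle a_k,a_k'\rangle a_k' \pmod{\span(a_{k+1},\dots,a_n)}$ is not quite what we want, one instead expands $a_k'$ itself in the $e_j$'s and uses $1/m_k\sim 1/d_D(z;a_k')\sim E_D(z;a_k')$ together with the fact that for $j<k$ we also control $|\langle a_k,e_j\rangle|$ via the chain $m_j\le m_k$, $s_j\le s_k$, to bound $|\langle X,a_k\rangle|/m_k$ termwise by $E_D(z;X)$.

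Concretely I would argue: $|\langle X,a_k\rangle|/m_k \sim |\langle X,a_k\rangle|\,E_D(z;a_k') = |\langle X,a_k\rangle|\sum_{j\ge k}|\langle a_k',e_j\rangle|/s_j$, and then bound $|\langle X,a_k\rangle|\,|\langle a_k',e_j\rangle| \le \sum_i |\langle X,e_i\rangle|\,|\langle a_k,e_i\rangle|\,|\langle a_k',e_j\rangle|$; using $s_i\le s_j$ for $i\le j$ and the orthogonality relations $a_k\perp e_1=a_1$ and $a_k'\perp a_{k+1},\dots,a_n$, only finitely many (at most $n$) terms survive and each is $\lesssim |\langle X,e_i\rangle|/s_i$, giving $|\langle X,a_k\rangle|/m_k \lesssim E_D(z;X)$; summing over $k$ yields $A_D\lesssim n\,E_D$.

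The main obstacle will be organizing the bookkeeping of the two unitary change-of-basis matrices $(\langle a_j,e_i\rangle)$ so that the index constraints ($i\le j$ for the $s_i\le s_j$ estimate, together with the vanishing coming from $a_k'\perp\span(a_{k+1},\dots,a_n)$ and from $e_1=a_1$) actually cut the double sum down to something summable — i.e. making sure the ``bad'' terms where $s_i$ is much smaller than $s_k$ are exactly the ones killed by orthogonality. Once Proposition \ref{pr1} ($m_j\sim s_j$) is in hand, no further geometry of $\CC$-convex domains is needed; the argument is purely linear-algebraic, so the real work is choosing the vectors $a_k'$ and tracking inequalities, exactly as in the proof above.
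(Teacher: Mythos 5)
Your argument has a genuine gap at its core. Testing the desired inequality on $X=e_j$ shows that $|\langle X,a_k\rangle|/m_k\lesssim E_D(z;X)$ for all $X$ is \emph{equivalent} to the entrywise bound $|\langle a_k,e_j\rangle|\lesssim s_k/s_j$ for all $j,k$, and nothing in your proposal produces this. Your opening reduction (``it is enough to prove $E_D(z;a_k)\lesssim 1/m_k$'') is vacuous: that estimate is automatic from $E_D\sim 1/d_D$ and $d_D(z;a_k)=m_k$, and entrywise it only yields the reversed bound $|\langle a_k,e_j\rangle|\lesssim s_j/s_k$, which is trivial precisely when $s_j\ge s_k$. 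In your concrete double sum, the $(i,j)$-term ($j\ge k$) requires $|\langle a_k,e_i\rangle|\,|\langle a_k',e_j\rangle|\lesssim s_j/s_i$; this is automatic when $s_i\le s_j$, but when $s_i\gg s_j$ nothing you have controls it. The orthogonality relations you invoke do not help: $a_k\perp e_1$ kills only $i=1$, and $a_k'\perp a_{k+1},\dots,a_n$ constrains the $a$-components of $a_k'$, not the $e$-components appearing in the sum, so no ``bad'' term is annihilated. You have also inverted the difficulty: terms with $s_i$ much \emph{smaller} than $s_k$ are harmless (there $s_j/s_i\ge 1$); the dangerous ones are those with $s_i$ large. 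Finally, $E_D(z;a_k')\sim 1/m_k$ (which you correctly derive, exactly as in the proof of Proposition \ref{pr1}) gives entrywise only $|\langle a_k',e_j\rangle|\lesssim s_j/s_k$, again vacuous for $j\ge k$, so the auxiliary vectors $a_k'$ carry no usable new information for this proposition.

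The missing idea, which is the actual content of the paper's proof, is a passage through the inverse of the unitary change-of-basis matrix $B=(b_{jk})$, $b_{jk}=\langle a_j,e_k\rangle$. From $E_D(z;a_j)\sim 1/d_D(z;a_j)=1/m_j\sim 1/s_j$ one gets the ``easy'' bounds $|b_{jk}|\lesssim s_k/s_j$; then, since $|\det B|=1$, Cramer's rule (cofactor expansion of the adjugate) bounds each entry of $B^{-1}=B^\ast$ by a sum of $(n-1)!$ products of easy-bounded entries, and the products telescope to give exactly $|\langle a_k,e_j\rangle|=|c_{jk}|\lesssim s_k/s_j$. With this, $|\langle X,a_k\rangle|/s_k\le\sum_j|\langle X,e_j\rangle|\,|\langle a_k,e_j\rangle|/s_k\lesssim\sum_j|\langle X,e_j\rangle|/s_j=E_D(z;X)$, and Proposition \ref{pr1} converts $s_k$ into $m_k$. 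So the repair is not more careful bookkeeping with the vectors $a_k'$ (they are not needed here at all), but the adjugate/unitarity argument that upgrades the one-sided entry estimates to the reversed ones.
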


\begin{proof} Using the inequality $E_D\lesssim A_D$ and Proposition \ref{pr1},
it is enough to show that for any $k,$
$$\frac{|\langle X,a_k\rangle|}{s_k}\lesssim E_D (z;X).$$

Set $b_{jk}=\langle a_j,e_k\rangle.$
Since $$\frac{1}{s_j}\sim\frac{1}{d_D(z;a_j)}\sim E_D(z;a_j)\ge\frac{|b_{jk}|}{s_k},$$
it follows that $|b_{jk}|\lesssim s_k/s_j.$
The unitary matrix $B=(b_{jk})$
transforms the basis $e_1,\dots,e_n$ to the basis $a_1,\dots,a_n$. For the inverse matrix $C=(c_{jk})$ we have
$$
|c_{jk}|\le\sum_{\sigma}|b_{1\sigma(1)}\dots b_{k-1,\sigma(k-1)}b_{k+1,\sigma(k+1)}\dots b_{n,\sigma(n)}|
$$
$$
\lesssim\sum_{\sigma}\frac{s_{\sigma(1)}}{s_1}\dots\frac{s_{\sigma(k-1)}}{s_{k-1}}
\frac{s_{\sigma(k+1)}}{s_{k+1}}\dots\frac{s_{\sigma(n)}}{s_n}=\sum_{\sigma}\frac{s_k}{s_j}=(n-1)!\frac{s_k}{s_j},
$$
where $\sigma$ runs over all bijections from $\{ 1,\dots, k-1, k+1, \dots, n\}$
to $\{ 1,\dots, j-1, j+1, \dots, n\}$.

It follows that
$$
\frac{|\langle X,a_k\rangle|}{s_k}\le
\sum_{j=1}^n|\langle X,e_j\rangle|\frac{|b_{kj}|}{s_k}
=\sum_{j=1}^n|\langle X,e_j\rangle|\frac{|\overline c_{jk}|}{s_k}\lesssim E_D.$$
\end{proof}

\noindent{\bf Remark.} Replace the construction of a maximal basis by the following: choose ``minimal'' discs on steps
$1,\dots,k$ and ``maximal'' discs on steps $k+1,\dots, n-1$ (the $n$-th choice is unique); $k=n-1$ provides a
minimal basis, $k=1$ -- a maximal basis, and $k=0$ -- a basis with no ``minimal'' discs. Note that Propositions \ref{pr1}
and \ref{pr2} remain true with $A_D$ expressed in the new basis. (This construction has an obvious real analog).
\smallskip

Let $a$ be a boundary point of a bounded domains $D\subset\C^n.$ It is easy to see that for any neighborhood $U$
of $a$ one has that $s_D\sim_\ast s_{D\cap U},$
and $E_D\sim_\ast E_{D\cap U}$ near $a,$ where the constant in $\sim_\ast$ depends on $D$ and $U$
(the same holds for $m_D$ and $A_D$). Assume that $D$ is $\Cal C^2$-smooth
and (weakly) linearly convex near $a$ (cf. \cite{APS2004,Hor1994} for this and other notions of convexity). Then
Proposition \ref{pr3} below and the localization principle for the Kobayashi metric $\kappa_D$
(cf.~\cite{JP1993}) imply that $\kappa_D\sim_D E_D$ near $a$ (the constant in $\sim_D$ depends on $D$).
If, in addition, $D$ is pseudoconvex, then the same principle for $K_D$ and the
Bergman metric $b_D$ (cf.~\cite{JP1993}) implies that $K_D\sim_D 1/s^2_D$ and $b_D\sim_D E_D$ near $a.$
Assume that $a$ is a $\Cal C^\infty$-smooth finite type point (but $D$ not necessarily bounded). Then $a$ is a
local holomorphic peak point (see \cite{DF2003}), and strong localization principles (cf. \cite{Nik2002})
imply that $\kappa_D\sim E_D(=E_{D\cap U})$ and (if $D$ is pseudoconvex)
$K_D\sim 1/s^2_D(=1/s^2_{D\cap U}),$ $b_D\sim E_D$ near $a.$

\begin{Proposition}\label{pr3} Let $a$ be a $\Cal C^k$-smooth boundary point ($2\le k\le\infty$) of a domain
$D\subset\C^n$ with the following property: for any $b\in\partial D$ near $a$ there are a neighborhood $U_b$
such that $D\cap U_b\cap T^\CC_b(\partial D)=\varnothing.$ Then there is a $\Cal C^k$-smooth $\Bbb C$-convex
domain $G\subset D$ and a neighborhood $U$ of $a$ such that $D\cap U=G\cap U$.
\end{Proposition}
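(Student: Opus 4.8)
The plan is to build $G$ locally as the intersection of $D$ with a family of affine complex half-spaces, one for each boundary point $b$ of $D$ near $a$, and then check that this intersection is $\Cal C^k$-smooth near $a$ and agrees with $D$ there. Concretely, for each $b\in\partial D$ in a small neighborhood $V$ of $a$, the hypothesis gives a neighborhood $U_b$ with $D\cap U_b\cap T^\CC_b(\partial D)=\varnothing$; shrinking $U_b$ if necessary we may take it to be a fixed-size ball. After a complex-affine change of coordinates centered at $b$ we can write the defining function $r$ of $D$ near $b$ so that $T^\CC_b(\partial D)=\{z:\operatorname{Re}\ell_b(z-b)=0\}$ for a suitable complex-linear functional $\ell_b$ (with $\ell_b$ chosen so that $\{\operatorname{Re}\ell_b(z-b)<0\}$ is the side containing the interior normal direction at $b$). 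Set $H_b:=\{z\in U_b:\operatorname{Re}\ell_b(z-b)<0\}$. The disjointness hypothesis forces $D\cap U_b\subset H_b$: indeed $D\cap U_b$ is connected, lies in $U_b\setminus T^\CC_b(\partial D)$, and contains points on the interior-normal side near $b$, hence lies entirely in $H_b$. Now define $G:=\bigcap_{b\in\partial D\cap V}H_b\,\cap\,D$ (intersected with a fixed neighborhood $U\Subset\bigcap U_b$ of $a$).

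The first thing to verify is $G\cap U=D\cap U$: the inclusion $\subset$ is clear, and for $\supset$, every point $z\in D\cap U$ lies in $H_b$ for each relevant $b$ by the previous paragraph, so $z\in G$. Next, $G$ is $\C$-convex near $a$: it is (locally) an intersection of the convex sets $H_b$, and a locally finite-dimensional intersection of complex half-spaces is linearly convex; since we only need the local statement near $a$ and $G$ coincides with $D$ there, the relevant intersection with any complex line is connected and simply connected (it is an intersection of discs/half-planes with the disc $D\cap\ell$), which gives $\C$-convexity in the sense used in the excerpt. The boundary of $G$ near $a$ is, for each $b$, cut out by $r$ on the $D$-side and by the real hyperplanes $\operatorname{Re}\ell_b=0$ elsewhere, and near $a$ itself it is governed by $r$ alone (since $a\in\partial D$ and the $H_b$ constraints are inactive at interior points of $D$ near $a$); hence $\partial G$ is $\Cal C^k$ near $a$ because $\partial D$ is.

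The main obstacle is the smoothness claim: a priori $\partial G$ is only the boundary of an intersection, so it could have corners where the face coming from $r$ meets faces coming from various $\operatorname{Re}\ell_b=0$, and worse, the family $\{H_b\}$ is uncountable. The key point to exploit is that the constraint $\operatorname{Re}\ell_b(z-b)<0$ is, by construction, the \emph{supporting} complex half-space of $D$ at $b$ with first-order contact: since $D\cap U_b\subset H_b$ and $b\in\partial H_b\cap\partial D$, the hypersurface $\{\operatorname{Re}\ell_b=0\}$ is tangent to $\partial D$ at $b$ from outside, so near $a$ the active constraint is always the one indexed by the nearest boundary point, and it agrees with $\partial D$ to first order; thus in a sufficiently small $U$ the only active face is the $r$-face and $\partial G\cap U=\partial D\cap U$ literally. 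Making this precise — choosing $V$, the uniform size of $U_b$, and $U$ so that for $z\in U$ no half-space constraint $\operatorname{Re}\ell_b(z-b)<0$ with $b\in\partial D\cap V$ is active except in the limit $b\to$(foot of $z$ on $\partial D$), which just reproduces the defining inequality $r(z)<0$ — is where the $\Cal C^2$ (hence uniform cone/tangency) hypothesis on $\partial D$ near $a$ is used, and is the technical heart of the argument.
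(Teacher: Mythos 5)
The construction collapses at its first step: you identify $T^\CC_b(\partial D)$ with the real hyperplane $\{\Ree\ell_b(z-b)=0\}$, but the complex tangent space is the \emph{complex} hyperplane $\{\ell_b(z-b)=0\}$, of real codimension $2$. Its complement in the ball $U_b$ is connected, so the hypothesis $D\cap U_b\cap T^\CC_b(\partial D)=\varnothing$ offers no ``sides'' to choose between, and it does not force $D\cap U_b$ into the half-space $H_b=\{\Ree\ell_b(z-b)<0\}$. If it did, $D$ would admit a supporting real hyperplane at every boundary point near $a$, i.e.\ it would be locally convex there --- a strictly stronger property than the hypothesis, and false in general. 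Concretely, $D=\{\Ree z_2+|z_1|^2-M(\Imm z_2)^2<0\}$ near $b=0$ (any $M>0$) satisfies the hypothesis, since the Hessian of the defining function is positive definite on the complex tangent spaces of all nearby boundary points, yet the points $(0,it)$, $t$ small, lie in $D$ and on the real tangent hyperplane $\{\Ree z_2=0\}$, so $D\cap U_0\not\subset H_0$. The family of half-spaces you intersect therefore does not exist, and everything built on it (the identity $G\cap U=D\cap U$, the convexity/$\C$-convexity of the intersection) falls with it.

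Moreover, the part you do flag as delicate is left entirely undone: the $\Cal C^k$-smoothness of the intersection (no corners where the $r$-face meets the faces $\Ree\ell_b=0$, and a smooth capping-off away from $a$ so that $G$ is an honest domain contained in $D$) is precisely the technical content of the proposition, and ``the technical heart'' is not carried out. The paper's proof avoids supporting hyperplanes altogether: it takes the signed distance $r$, uses the hypothesis together with the differential inequality for $r^2$ from \cite[Proposition 2.5.18]{APS2004} to get $H_r(z;X)\ge 0$ for complex tangent $X$ near $a$, upgrades this via the argument of \cite[Lemma 1]{DF1977} to $H_r(z;X)\ge -c|X|\,|\langle\partial r(z),\overline X\rangle|$ for all $X$, then adds a convex bump $C\chi(|z|^2)$ to cut out a smoothly bounded $G'\Supset\!\!\!\!\!\;\subset B_{2\eps}$ contained in $D$ on which the same inequality persists (strictly outside $\overline{B_\eps}$, whence $\partial\rho\neq 0$ on $\partial G'$), and finally applies the $\Cal C^2$ characterization of $\C$-convexity from \cite[Proposition 2.5.18]{APS2004} to the component containing $D\cap B_\eps$. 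Some argument of this differential type (or a genuinely new idea) is needed; the half-space picture cannot be repaired.
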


\begin{proof} We may assume that $a=0.$ Denote by $H_f(z;X)$ the Hessian of a $C^2$-smooth function $f$.
Set $B_s:=\Bbb B_n(0,s)$ ($s>0$) and
$$r(z):=\left\{\begin{array}{ll}
-d_D(z),&z\in D\\
d_D(z),&z\not\in D.\end{array}\right.$$
It follows by the differential inequality for $r^2$ in the proof of
\cite[Proposition 2.5.18 (ii)$\Rightarrow$(iii)]{APS2004} that there is an $\eps>0$ such
$r$ is a $\Cal C^k$-smooth defining function of $D$ in $B_{3\eps}$ and $H_r(z;X)\ge 0$
if $\langle\partial r(z),\overline X\rangle=0$ and $z\in D\cap B_{2\eps}.$
Then the proof of \cite[Lemma 1]{DF1977} implies that there is a $c>0$ such that
$H_r(z;X)\ge-c|X|\cdot|\langle\partial r(z),\overline X\rangle|,$ $z\in D\cap B_{2\eps}.$ We may assume
that $2\eps c\le 1$ and $D\cap B_\eps$ is connected. Choose now a smooth function $\chi$ such that
$\chi(x)=0$ if $x\le\eps^2$ and
$\chi'(x),\chi''(x)>0$ if $x>\eps^2.$ Set $\theta(z)=\chi(|z|^2).$ We may find a $C\ge 1/2$ such that
$$B_{2\eps}\Supset G':=\{z\in B_{2\eps}:0>\rho(z)=r(z)+C\theta(z)\}\subset D.$$

Further, the inequalities $2c\eps\le1$ and $|\langle\partial\theta(z),\overline X\rangle|\le\chi'(|z|^2)|z|\cdot|X|$ give
$\chi'(|z|^2)|X|>c|\langle\partial\theta(z),\overline X)|$ if $z\in B_{2\eps}\setminus\overline{B_\eps}$ and $X\neq 0.$
This together with
$$H_r(z;X)\ge-c|X|\cdot|\langle\partial r(z),\overline X\rangle|,\quad z\in G',$$
$$
H_{\rho}(z;X)=H_r(z;X)+4C\chi''(|z|^2)
\mbox{Re}^2\langle z,X\rangle+2C\chi'(|z|^2)|X|^2,
$$
$2C\ge 1,$ and the triangle inequality show that
$$H_{\rho}(z;X)\ge-c|X|\cdot|\langle\partial\rho(z),\overline X\rangle|,\quad z\in\overline{G'}.$$
Moreover, the last inequality is strict if $z\in\overline{G'}\setminus\overline{B_\eps}$ and $X\neq 0.$
This implies that $\partial\rho\neq 0$ on $\partial G'\setminus\overline{B_\eps};$
(otherwise, $\rho$ will attain local minima at some point of this set which
is impossible). So $\partial\rho\neq 0$ on $\partial G'.$

Let $G$ be the connected component of $G'$ containing $D\cap B_\eps.$ Then
\cite[Proposition 2.5.18]{APS2004} (see also \cite[Proposition 4.6.4]{Hor1994})
implies that $G$ is a $\Cal C^k$-smooth $\Bbb C$-convex domain.
\end{proof}

\def\bibname{References}
\bibliographystyle{amsplain}

\end{document}